\providecommand{\U}[1]{\protect\rule{.1in}{.1in}}
\newtheorem{theorem}{Theorem}
\newtheorem{definition}[theorem]{Definition}
\newtheorem{example}[theorem]{Example}
\newtheorem{lemma}[theorem]{Lemma}
\newtheorem{remark}[theorem]{Remark}
\newenvironment{proof}[1][Proof]{\noindent\textbf{#1.} }{\ \rule{0.5em}{0.5em}}
\def\U{\mathbb{U}}
\begin{document}

\title{On a nonlinear neutral stochastic functional integro-differential
	equation driven by fractional Brownian motion}

\author{B. BOUFOUSSI \thanks{Department of Mathematics, Faculty of Sciences Semlalia, Cadi Ayyad University, 2390 Marrakesh, Morocco}
\and S. Hajji \thanks{Department of Mathematics, Regional Center for the Professions of Education and Training, Marrakesh, Morocco}
\and S. MOUCHTABIH \footnotemark[1]\quad \thanks{Corresponding author. Email: soufiane.mouchtabih@gmail.com} }

\date{}
\maketitle

\vskip -0.5cm


\vskip -0.4cm

\noindent\textbf{Abstract}: In this paper, we study the existence and uniqueness of mild solution for a stochastic neutral partial functional integro-differential equation with delay in a Hilbert space driven by a fractional Brownian motion and with non-deterministic diffusion coefficient. We suppose that the linear part has a resolvent operator in the sense given in \cite{Grimmer}. We also establish a sufficient condition for the existence of the density of a function of the solution. An example is provided to illustrate the results of this work.\bigskip

\noindent\textbf{Keywords:} Resolvent operator; $C_0$-semigroup; Mild solution; Fractional Brownian motion; Stochastic integration for fractional Brownian motion; Malliavin Derivative.
 \bigskip
 
\noindent\textbf{2010 Mathematics Subject Classification:} 60H15; 60H07; 60G22

\section{Introduction}

The theory of stochastic differential equations driven by a fractional Brownian motion in infinite dimension has been widely examined by many researchers
due to various mathematical models in different fields, such as physics, population dynamics, ecology, biological systems, biotechnology, optimal control, theory of elasticity,
electrical networks, and several other areas of engineering and science. There is now a rich litterature on the topic of stochastic equations driven by a fractional Brownian motion,
for example, \cite{duncan2}, proved the existence and uniqueness of a mild solution for a class of stochastic differential equations in a Hilbert space with a cylindrical fractional Brownian motion with Hurst parameter $H\in (\frac{1}{2},1)$, \cite{MaslNual} established the existence and uniqueness of a mild solution for non linear stochastic evolution equations in Hilbert space. Moreover, \cite{Caraballo2011} discussed mild solutions for stochastic delay evolution equations driven by a fractional Brownian motion.  \\The study of neutral stochastic functional differential equations (NFSDEs) driven by a fractional Brownian motion (fBm) has became an active area of investigation.
\cite{Boufoussi} analysed the existence and uniqueness of a mild solution for a neutral stochastic differential equation with finite delay driven by fractional Brownian motion in a Hilbert space, and established some sufficient conditions ensuring the exponential decay to zero in mean square for the solution. In \cite{Caraballo} the authors studied the existence
and uniqueness of mild solutions to neutral stochastic delay functional integro-differential equations perturbed by fractional Brownian motion. Since then, many other works have followed dealing with the same subject, but all these works consider NFSDEs with deterministic diffusion coefficients. Recently, \cite{Boufoussi2}, inspired by the work of \cite{ferrante}, proved the existence of mild solution to delay differential equations driven by a fractional Brownian motion in Hilbert space and with nonlinear stochastic diffusion terms. Following this line, our main objective in this paper, is to generalize the result in \cite{Caraballo} to a class of neutral stochastic functional integro-differential equations with nondeterministic diffusion terms. More precisely, we consider the following stochastic equation:
\begin{eqnarray}\label{eq1}
	&& d[x(t)+g(x(t-r))]=A[x(t)+g(x(t-r))]dt\nonumber\\
	&&\qquad\qquad +\left[\int_0^tB(t -s)\left[x(s)+g(x(s-r))\right]ds+ f(x(t-r))\right]dt\nonumber\\
	&&\qquad \qquad+\sigma(x(t-r))dB^H(t),\qquad\qquad\qquad\qquad\qquad\qquad\;0\leq t \leq T,\nonumber\\
	x(t)&=&\varphi (t), \; -\tau \leq t \leq 0\,,
\end{eqnarray}
where $ A $ is a closed linear operator on a  Hilbert space $V$ with domain $D(A)\subset V $.
For all $t\geq 0,\, B(t)$ is a closed linear operator with domain $ D(B)\supset D(A)$ independent
of $t$. $B^H$ is a fractional Brownian motion
with Hurst parameter $H>1/2$ defined in a complete probability
space $(\Omega,\mathcal{F},\mathbb{P})$,\, $f,\sigma,\, g:V \rightarrow V$ are
appropriate functions, while $\varphi:[-r,0]\rightarrow V$ is a continuous function.  The importance of considering this equation lies on the difficulties caused by the presence of
the stochastic integral's term, which is defined here in the Skorohod sense and then have to be carefully managed. We prove the existence and uniqueness of a mild solution of the equation (\ref{eq1}). The proof is based on an iterative procedure to estimate the Malliavin derivatives of the solution. The technical nature of this method makes it difficult to answer certain classical questions such as studying the stability of the solution.
\\
The theory of integro-differential equations with resolvent operators is an important branch of differential equations, which has an extensive physical background and provides useful mathematical models for many fields of applications. This is why it has received much attention in recent years. The resolvent operator is similar to the semigroup operator for abstract differential equations in Banach spaces. However, the resolvent operator does not satisfy semigroup properties.

This paper is organized as follows, In Section 2 we  introduce some notations, concepts, and basic results about fractional Brownian motion and Stochastic integral on Hilbert spaces. The existence and uniqueness of mild solutions are discussed in Section 3. In Section 4, we investigate the absolute continuity for the law of
$F(x(t))$, where $x(t)$ is the mild solution obtained in section $3$ and $F$ a function satisfying appropriate conditions.
Finally, in section $5$ we will exhibit an example to illustrate our previous abstract results.

\section{Preliminaries}

In this section, stochastic integrals with respect to a fractional Brownian motion in a separable Hilbert space  is introduced, and some basic properties of this integral are noted.\\
Let $(\Omega,\mathcal{F}, \mathbb{P})$ be a complete probability space.
Consider a time interval $[0,T]$ with arbitrary fixed horizon $T$ and let
$\{B^H(t) , t \in [0, T ]\}$ the one-dimensional fractional Brownian motion
with Hurst parameter $H\in(1/2,1)$. This means by definition that $B^H$ is a centred Gaussian process with covariance function:
$$ R_H(s, t) =\frac{1}{2}(t^{2H} + s^{2H}-|t-s|^{2H}).$$
Moreover $B^H$ has the following Wiener integral representation:
$$B^H(t) =\int_0^tK_H(t,s)\, dB(s)$$
where $B = \{B(t) :\; t\in [0,T]\}$ is a Wiener process, and $K_H(t; s)$ is
the kernel given by
$$K_H(t, s )=c_Hs^{\frac{1}{2}-H}\int_s^t (u-s)^{H-\frac{3}{2}}u^{H-\frac{1}{2}}du$$
for $t>s$, where $c_H=\sqrt{\frac{H(2H-1)}{\beta (2-2H,H-\frac{1}{2})}}$ and $\beta(,)$ denotes the Beta function. We put $K_H(t, s ) =0$ if $t\leq s$.\\
We will denote by $\mathcal{H}$ the reproducing kernel Hilbert space of the fBm. In fact
$\mathcal{H}$ is the closure of set of indicator functions $\{1_{[0;t]},  t\in[0,T]\}$ with respect to the scalar product
$$\langle 1_{[0,t]},1_{[0,s]}\rangle _{\mathcal{H}}=R_H(t , s).$$
The mapping $1_{[0,t]}\rightarrow B^H(t)$ can be extended to an isometry between $\mathcal{H}$
and the first  Wiener chaos and we will denote by $B^H(\varphi)$ the image of $\varphi$ by the previous isometry.

We recall that for $\psi,\varphi \in \mathcal{H}$ their scalar product in
$\mathcal{H}$ is given by
$$\langle \psi,\varphi\rangle _{\mathcal{H}}=H(2H-1)\int_0^T\int_0^T\psi(s)
\varphi(t)|t-s|^{2H-2}dsdt\,.$$
Let us consider the operator $K_H^*$ from $\mathcal{H}$ to $L^2([0,T])$ defined by
$$(K_H^*\varphi)(s)=\int_s^T\varphi(r)\frac{\partial K}{\partial r}(r,s)dr\,.$$
We refer to \cite{nualart} for the proof of the fact that $K_H^*$ is an isometry between $\mathcal{H}$ and $L^2([0,T])$. Moreover for any $\varphi \in \mathcal{H}$, we have
$$ B^H(\varphi)=\int_0^T(K_H^*\varphi)(t)\, dB(t)\,.$$
Let $\mathcal{S}$ denote the class of smooth random variables such that a random variable $F \in \mathcal{S}$ has the form
\begin{equation}\label{l1}
	F = f(B^H(\phi_1), . . . , B^H(\phi_n))
\end{equation}
where $f$ belongs to $\mathcal{C}_b^{\infty}(\mathbb{R}^n),\; \phi_1, . . . , \phi_n$ are in $\mathcal{H}$ and $n \geq 1$.
The derivative of a smooth random variable $F$ of the form $(\ref{l1})$ is the $\mathcal{H}$-valued random variable given by
$$DF =\sum_{j=1}^n \frac{\partial f}{\partial x_j}(B^H(\phi_1), . . . , B^H(\phi_n))\phi_j.$$
We can define the iteration of the operator $D$ in such a way that for a smooth random variable $F$, the iterated derivative $D^kF$ is a random variable with values in $\mathcal{H}^{\otimes k}$.

Let $V$ be a real separable Hilbert space. We denote by $\mathcal{H}_V$ the completion of the pre-Hilbert space of $V$-valued bounded Borel measurable functions $F:[0,T]\rightarrow V$ with the norm induced from the inner product
$$<F, G>_{\mathcal{H}_V}:=\int_0^T\int_0^T\langle F(s),G(t)\rangle_V \phi_H(t-s)ds\,dt$$
where $\phi_H(s)=H(2H-1)|s|^{2H-2}$. It is easily verified that $\mathcal{\widetilde{H}}_V  \subseteq \mathcal{H}_V,$ where $\mathcal{\widetilde{H}}_V$ is the Banach space of Borel measurable functions with the norm $\mid.\mid_{\mathcal{\widetilde{H}}_V}$ given by
$$\mid \varphi\mid^2_{\mathcal{\widetilde{H}}_V}:= \int_0^T\int_0^T \parallel
\varphi(u)\parallel  \parallel \varphi(v) \parallel\phi_H(u-v)du dv\,.$$
It can be seen that $\mathbb{L}^{1/H}([0,T],V)\subseteq \mathcal{\widetilde{H}}_V$ and, in particular, $ \mathbb{L}^{2}([0,T],V)
\subseteq\mathcal{\widetilde{H}}_V$. For more details we can refer to \cite{duncan1}.

Consider the family $\mathcal{S}_V$ of $V$-valued smooth random variables of
the form
$$F=\sum_{j=1}^n F_j v_j,\;\;\; v_j\in V,\;\; F_j\in \mathcal{S}\,. $$
Define $D^kF=\sum_{j=1}^n D^kF_j\otimes v_j,$ then $D^k$ is a closable operator from $\mathcal{S}_V \subset \mathbb{L}^p(\Omega,V)$ into $\mathbb{L}^p(\Omega,\mathcal{H}_V^{\otimes k})$ for any $p\geq 1$.\\
For any integer $k\geq 1$ and any real number $p\geq 1,$ we can define the semi-norm on $\mathcal{S}_V $
$$\|F\|_{k,p}^p=E\|F\|_V^p+E(\sum_{i=1}^k \|D^iF\|^p_{\mathcal{H}_V^{\otimes i}})\,.$$
We define the space $\mathbb{D}^{k,p}(V)$ as the completion of $\mathcal{S}_V$ with respect to the norm $\|F\|_{k,p}$, and we will write  $\mathbb{D}^{k,p}(\mathbb{R})=\mathbb{D}^{k,p}.$
\begin{definition}
	The divergence operator $\delta$ is the adjoint of the derivative operator, defined by means of the duality relationship
	$$E\langle F, \delta (u) \rangle_V=E\langle DF, u\rangle_{\mathcal{H}_V}, \forall F\in \mathcal{S}_V$$
	where $u$ is a random variable in $\mathbb{L}^2(\Omega,\mathcal{H}_V)$.
\end{definition}
\begin{definition}\label{def integr}
	We say that $u$ belongs to the domain of the operator $\delta$, denoted by $Dom(\delta)$, if $F\mapsto \langle u,DF\rangle_{\mathcal{H}_V}$ is continuous on $\mathcal{S}_V$ with respect to the $\mathbb{L}^2(\Omega)$ norm.\\
	For $u \in Dom(\delta) $, we denote $\delta (u)$ by $\int_0^T u(s) \, dB^H(s)$.
\end{definition}
Let us now consider the space $\widetilde{\mathcal{H}}_V\otimes
\widetilde{\mathcal{H}}_V$ of measurable functions $\varphi:[0,T]^2\rightarrow V$
such that
$$
\|\varphi\|^2_{\widetilde{\mathcal{H}}_V^{\otimes 2}}=\int_{[0,T]^4}
\|\varphi(s,v)\|\|\varphi(s',v')\|\phi_H(s-s')\phi_H(v-v')ds\,
dv\,ds'\,dv'<\infty\,.
$$
We denote by $\mathbb{D}^{1,2}(\widetilde{\mathcal{H}}_V)$ the space of processes $ u $ satisfying:
$$E\|u\|^2_{\widetilde{\mathcal{H}}_V}+E\|Du\|^2_{\widetilde
	{\mathcal{H}}_V^{\otimes 2}} < \infty$$
It has been shown in \cite{duncan1} that $\mathbb{D}^{1,2}(\widetilde{\mathcal{H}}_V)$ is included in $Dom (\delta)$ and for a process $u$ in $\mathbb{D}^{1,2}(\widetilde{\mathcal{H}}_V)$
we have:
$$ E\parallel \delta(u)\parallel^2=E\|u\|^2_{\mathcal{H}_V}+E\int_{[0,T]^4}\langle D_pu(q), D_v u(s) \rangle
\phi_H(p-s)\phi_H(v-q)dp\,dq\,dv\,ds.$$

For fixed $m\geq 1$, we will say that a $V -$valued stochastic process  $Z=\{Z(t),
t\in [0,T]\}$ satisfies condition $\mathcal{A}_{m}$ if $Z(t)\in \mathbb{D}^{m,2}(V)$ for any $t \in [0,T]$, and for any
$k\leq m $, we have
$$\displaystyle \sup_t \mathbb{E}\|Z(t)\|^2_V \leq c_{1} \;\;
\mbox{ and}\;\;\displaystyle \sup_t\sup_{u,|u|=k}\mathbb{E}\|D_u^kZ(t)\|^2_V\leq
c_{2,k}$$
for some positive constants $c_{1},c_{2,k}$, and where $\displaystyle\sup_{u,|u|=k} $ is the supremum taken on all the vectors $u$ of length $k$.
Notice that if $Z$ satisfies condition $\mathcal{A}_{m}$, then $Z$ belongs to $\mathbb{D}^{1,2}(\mathcal{\widetilde{H}}_V) $.

Now we turn to state some notations and basic facts about  the theory of resolvent operators needed in the sequel. For additional details on resolvent operators, we refer  to  \cite{Grimmer} and \cite{pruss}.\\
Let $A:D(A)\subset V \rightarrow V$  be a closed linear operator and  for all $t\geq 0,\, B(t)$  a closed linear operator with domain $ D(B(t))\supset D(A)$.
Let us denote by $X$ the Banach space $D(A)$, the domain of operator $A$, equipped with the graph norm
$$\|y\|_X :=\|Ay\|_V+\|y\|_V \;\;\mbox{for}\;\; y\in X.$$
We will note by $C([0,+\infty),X)$, the space of all continuous functions from $[0,+\infty)$ into $X$, and $\mathcal{B}(X,V)$ the set of all bounded linear operators form $X$ into $V$. Consider the following Cauchy problem
\begin{eqnarray}\label{cauchy}
	v'(t) &=& Av(t)+\int_0^tB(t -s)v(s)ds \,,\,\;\; \mbox{for}\;\; t\geq 0,\nonumber\\
	v(0) &=& v_0 \in V.
\end{eqnarray}
We recall the following definition (\cite{Grimmer})  
\begin{definition} A resolvent operator of the
	Equation $(\ref{cauchy})$ is a bounded linear operator valued function $R(t)\in \mathcal{L}(V)$ for $t\geq 0$, satisfying the following properties:
	\begin{itemize}
		\item  [(i)] $ R(0) = I$ and $\|R(t)\|\leq Ne^{\beta t}$  for some constants $N$ and $\beta$.
		\item  [(ii)] For each $x\in V$, $R(t)x$ is strongly continuous for $t\geq 0$.
		\item   [(iii)] $R(t) \in \mathcal{L}(X) $ for $t\ge 0$. For $x \in X$, $R(.)x\in \mathcal{C}^1([0,+\infty);V)\cap \mathcal{C}([0,+\infty);X)$ and
		$$R'(t)x = AR(t)x +\int_0^tB(t -s)R(s)xds= R(t)Ax+\int_0^tR(t -s)B(s)xds, \;\;\mbox{for}\;\; t\geq 0.$$
	\end{itemize}
\end{definition}
The resolvent operator satisfies a number properties reminiscent semi-group, it plays an important role to study the existence of solutions and to establish a variation of constants formula for nonlinear systems. To assure the existence of the resolvent operator, we need the following hypotheses:
\begin{itemize}
	\item [$(\mathcal{H}.1)$] $A$ is the infinitesimal generator of a $C_0$-semigroup $(T(t))_{t\geq0}$ on $V$.
	\item [$(\mathcal{H}.2)$] For all $t\geq 0$, $B(t)$ is a closed linear operator from $X$
	into $V$. Furthermore for any $y\in X$ the map $t\to B(t)y$ is bounded, differentiable and the derivative $t\to B'(t)y$ is bounded and uniformly continuous on $\mathbb{R}^+$.  
\end{itemize}
The following theorem gives the existence conditions of a resolvent operator for the equation (\ref{cauchy}).
\begin{theorem}(\cite{Grimmer}) \label{res}
	Assume that hypotheses $(\mathcal{H}.1)$ and $(\mathcal{H}.2)$ hold, then the Cauchy problem  $(\ref{cauchy})$  admits a unique resolvent operator $(R(t))_{t\geq 0}$.
\end{theorem}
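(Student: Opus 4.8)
The plan is to reduce the resolvent equation in property (iii) to a Volterra-type integral equation via the variation-of-constants formula for the $C_0$-semigroup $(T(t))_{t\ge 0}$ generated by $A$, and then to solve it by successive approximations. Writing $X=D(A)$ for the domain equipped with the graph norm, any sufficiently regular $X$-valued solution of
$$R'(t)x = AR(t)x + \int_0^t B(t-s)R(s)x\,ds, \qquad R(0)x=x,$$
must satisfy, by Duhamel's principle,
$$R(t)x = T(t)x + \int_0^t T(t-s)\Big(\int_0^s B(s-\tau)R(\tau)x\,d\tau\Big)ds. \qquad (\ast)$$
I would take $(\ast)$ as the defining equation and set up the Picard iterates $R_0(t)=T(t)$ and $R_{n+1}(t)x = T(t)x + \int_0^t T(t-s)\big(\int_0^s B(s-\tau)R_n(\tau)x\,d\tau\big)ds$, aiming to show they converge in $C([0,\tau];\mathcal{L}(X))$ for every $\tau>0$.

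The crucial regularity step is to check that each iterate actually maps $X$ into $X$ and that the inner convolution $h_n(s)=\int_0^s B(s-\tau)R_n(\tau)x\,d\tau$ is continuously differentiable as a $V$-valued map. This is exactly where $(\mathcal{H}.2)$ enters: since $t\mapsto B(t)y$ and $t\mapsto B'(t)y$ are bounded and $B'$ is uniformly continuous, one may differentiate under the integral sign to obtain $h_n'(s)=B(0)R_n(s)x+\int_0^s B'(s-\tau)R_n(\tau)x\,d\tau\in V$, so $h_n\in C^1([0,\tau];V)$. Integration by parts against the semigroup then yields $\int_0^t T(t-s)h_n(s)\,ds\in D(A)=X$ together with a closed-form expression for $A\int_0^t T(t-s)h_n(s)\,ds$, which lets the graph norm of $R_{n+1}(t)x$ be controlled. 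Using $\|T(t)\|\le Me^{\omega t}$ and the uniform bounds $\sup_t\|B(t)\|_{\mathcal{L}(X,V)}$ and $\sup_t\|B'(t)\|_{\mathcal{L}(X,V)}$, the differences $R_{n+1}-R_n$ obey a factorial-type estimate in $C([0,\tau];\mathcal{L}(X))$, giving uniform convergence to a limit $R(t)$ that solves $(\ast)$ and, upon differentiation, the first form of the resolvent equation in (iii).

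It then remains to verify the other properties. Property (i): $R(0)=T(0)=I$ is immediate, while the exponential bound and the strong continuity on all of $V$ required in (ii) are obtained by showing that $R(t)$ extends to a bounded operator on $V$; here I would propagate the estimates from the graph norm down to the $V$-norm, close a Gronwall inequality, and extend by density of $X$ in $V$. The second, commuted form of the resolvent equation in (iii) follows either from a parallel construction based on the adjoint convolution or, more economically, from a commutation argument together with the uniqueness statement applied to both candidate families. Finally, uniqueness: any resolvent operator must satisfy $(\ast)$, so the difference $S=R_1-R_2$ of two resolvents solves the homogeneous equation $S(t)x=\int_0^t T(t-s)\int_0^s B(s-\tau)S(\tau)x\,d\tau\,ds$ with $S(0)=0$, and Gronwall's lemma forces $S\equiv 0$.

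The main obstacle I expect is the regularity bookkeeping rather than the estimates: one must show that the Duhamel term lands in $D(A)$ and is continuously $V$-differentiable, which is precisely the role of the differentiability and uniform continuity of $B'$ in $(\mathcal{H}.2)$; a secondary difficulty is transferring bounds obtained in the graph norm of $X$ to the $V$-norm so as to realize $R(t)$ as a strongly continuous, exponentially bounded family in $\mathcal{L}(V)$. An alternative route avoiding the iteration is the Laplace transform: for $\mathrm{Re}\,\lambda$ large one factors $\lambda-A-\widehat{B}(\lambda)=(\lambda-A)\big(I-(\lambda-A)^{-1}\widehat{B}(\lambda)\big)$ on $X$, where $\|\widehat{B}(\lambda)\|_{\mathcal{L}(X,V)}=O(1/\lambda)$ by integration by parts, so the Neumann series converges and $\widehat{R}(\lambda)=(\lambda-A-\widehat{B}(\lambda))^{-1}$ is well defined; in that approach the difficulty shifts entirely to inverting the transform and recovering the stated regularity of $R$.
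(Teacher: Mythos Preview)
The paper does not give its own proof of this theorem: it is quoted verbatim from Grimmer \cite{Grimmer} and used as a black box, so there is no in-paper argument to compare against. Your sketch is a faithful outline of the standard proof in that reference: rewrite the resolvent identity as the Volterra equation $(\ast)$ via Duhamel, iterate in $C([0,\tau];\mathcal{L}(X))$ using the bounds on $B$ and $B'$ from $(\mathcal{H}.2)$ to get factorial-type convergence, then extend to $\mathcal{L}(V)$ by density and Gronwall. The Laplace-transform alternative you mention is also in the spirit of \cite{Grimmer} and \cite{pruss}. No gap to report; just be aware that the paper treats this result as imported, not proved.
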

In what follows, we recall some existence results for the following integro-differential equation
\begin{eqnarray}\label{Cauchy-integro}
	v'(t)&=& Av(t)+ \int_0^t B(t-s)v(s)ds+q(t),\ for t\ge 0\\
	v(0)&=&v_0\in V\nonumber
\end{eqnarray}
where $q:[0,+\infty[\to V$ is a continuous function.\\
\begin{definition}
	A continuous function $v:[0,+\infty)\to V$ is said to be a strict solution of equation $(\ref{Cauchy-integro})$ if
	\begin{itemize}
		\item[$(i)$] $v\in C^1([0,+\infty),V)\cap C([0,+\infty),X)$,
		\item[$(ii)$] $v$ satisfies equation $(\ref{Cauchy-integro})$ for $t\ge 0$.
	\end{itemize}
\end{definition}
\begin{definition}
	A function $v:[0,+\infty)\to V$ is called a mild solution of $(\ref{Cauchy-integro})$ if it satisfies the following variation of constants formula: for any $v(0)\in V$,
	\begin{equation}\label{variation-cons}
		v(t)=R(t)v(0)+\int_0^t R(t-s)q(s)ds\,,\,\,\,\, t\geq 0\,,
	\end{equation}
	where $R(t)$ is the resolvent operator of the Equation (\ref{cauchy}).
\end{definition}
\begin{remark}
	It has been shown in \cite{Grimmer} that under $(\mathcal{H}.1)$ and $ (\mathcal{H}.2)$ a strict solution of Equation $(\ref{Cauchy-integro})$ is a mild solution. Reciprocally, if in addition the function $q$ is sufficiently regular a mild solution of Equation $(\ref{Cauchy-integro})$ with $v_{0}\in D(A)$ is a strict solution.\\
	Clearly in our situation, due to the presence of the stochastic integral in the Equation (\ref{eq1}), we will not be concerned by strict solutions.
\end{remark}
\section{Existence and uniqueness of a mild solution}
In this section we study the existence and uniqueness of a mild solution for Equation (\ref{eq1}). In the sequel, we assume that the following conditions hold.
\begin{itemize}
	\item [$(\mathcal{H}.3)$]
	$f,\, g,\, \sigma:V \rightarrow V $  are bounded functions with bounded Fr\'echet
	derivatives up to some order $m\geq1.$
\end{itemize}
Moreover, we assume that $\varphi \in \mathcal{C}([-\tau,0],\mathbb{L}^2(\Omega,V))$.

Similar to the deterministic situation we give the following definition of mild solutions for Equation (\ref{eq1}).
\begin{definition}
	An $V$-valued  process $\{x(t),\;t\in[-\tau,T]\}$, is called  a mild solution of equation (\ref{eq1}) if
	\begin{itemize}
		\item[$i)$] $x(.)\in \mathcal{C}([-r,T],\mathbb{L}^2(\Omega,V))$,
		\item[$ii)$] $x(t)=\varphi(t), \, -r \leq t \leq 0$.
		\item[$iii)$]For arbitrary $t \in [0,T]$, we have
		\begin{eqnarray*}
			x(t)&=&R(t)[\varphi(0)+g(\varphi(-r))]-g(x(t-r))\\
			&+&\int_0^t R(t-s)f(x(s-r))ds+\int_0^t R(t-s)\sigma(x(s-r))dB^H(s)\;\;
			\mathbb{P}-a.s.\phantom{\int_0^2+2}\,,\\
		\end{eqnarray*}
	\end{itemize}
	where $R(.)$ is the resolvent operator of the Cauchy problem (\ref{cauchy}).
\end{definition}
Our main result is the following:
\begin{theorem}\label{thm1}
	Suppose that $(\mathcal{H}.1)$, $(\mathcal{H}.2)$ and $(\mathcal{H}.3)$ hold. Then the equation $(\ref{eq1})$ admits a unique solution on $[-r,T]$ for every $T\le mr$.
\end{theorem}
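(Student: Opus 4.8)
The plan is to construct the solution interval by interval, exploiting the delay structure. Partition $[0,T]$ into the successive intervals $I_k := [(k-1)r, kr]$, $k\ge 1$; since $T \le mr$ there are at most $m$ of them. The decisive observation is that on $I_k$ every occurrence of the delayed argument $x(\cdot-r)$ refers only to values of $x$ on $I_{k-1}$, which are already known from the previous step. Consequently the defining identity in the definition of mild solution does not couple $x(t)$ with itself on $I_k$: it is an explicit variation-of-constants formula in which the forcing $f(x(\cdot-r))$, the neutral term $g(x(\cdot-r))$ and the integrand $\sigma(x(\cdot-r))$ are completely determined by the restriction of $x$ to $I_{k-1}$. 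Thus no fixed-point argument is needed inside a single interval; the whole difficulty is to show that the right-hand side is well defined (in particular that the Skorohod integral makes sense) and to propagate suitable Malliavin--Sobolev regularity from one interval to the next.

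I would therefore set up an induction whose hypothesis is that $x$ is already built on $[-r,(k-1)r]$, lies in $\mathcal{C}([-r,(k-1)r],\mathbb{L}^2(\Omega,V))$, and its restriction to $I_{k-1}$ satisfies condition $\mathcal{A}_{m-k+2}$. For the base case $k=1$ the delayed argument is the deterministic history $\varphi$, so the integrand $R(\cdot)\sigma(\varphi(\cdot-r))$ is deterministic, the Skorohod integral reduces to a Wiener integral with respect to $B^H$, and one checks directly that $x$ on $I_1$ belongs to every $\mathbb{D}^{k,2}(V)$ and satisfies $\mathcal{A}_m$. For the inductive step the hypothesis $\mathcal{A}_{m-k+2}$ guarantees, via the remark following the definition of $\mathcal{A}_m$, that $\sigma(x(\cdot-r)) \in \mathbb{D}^{1,2}(\widetilde{\mathcal{H}}_V)$, hence $R(\cdot)\sigma(x(\cdot-r)) \in \mathrm{Dom}(\delta)$ and the integral defining $x$ on $I_k$ is meaningful. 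I would then define $x$ on $I_k$ by the formula and verify continuity in $\mathbb{L}^2(\Omega,V)$ together with the bounds required by $\mathcal{A}_{m-k+1}$.

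The estimates are the core of the argument. To bound $\mathbb{E}\|x(t)\|^2$ on $I_k$ I would use $\|R(t)\|\le Ne^{\beta t}$ from property $(i)$ of the resolvent, boundedness of $f,g,\sigma$ from $(\mathcal{H}.3)$, and the identity $E\|\delta(u)\|^2 = E\|u\|^2_{\mathcal{H}_V}+E\int_{[0,T]^4}\langle D_p u(q), D_v u(s)\rangle \phi_H(p-s)\phi_H(v-q)\,dp\,dq\,dv\,ds$ recalled above for the stochastic term. Iterating the Malliavin derivative through the formula, and using the rule $D_\theta\delta(u)=u(\theta)+\delta(D_\theta u)$ together with the Fa\`a di Bruno chain rule for $D^j[\sigma(x(\cdot-r))]$, $D^j[f(x(\cdot-r))]$ and $D^j[g(x(\cdot-r))]$ --- licit because the Fr\'echet derivatives of the coefficients are bounded up to order $m$ by $(\mathcal{H}.3)$ --- I would obtain uniform bounds $\sup_t\sup_{u,|u|=j}\mathbb{E}\|D_u^j x(t)\|^2 \le c_{2,j}$ for $j\le m-k+1$. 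The single feature that dictates $T\le mr$ appears precisely here: estimating the $\mathbb{L}^2$-norm of the $j$-th derivative of the Skorohod term costs one further derivative, since $\|\delta(D^j u)\|_{\mathbb{L}^2}$ is controlled through $D^{j+1}u$, hence through $D^{j+1}x(\cdot-r)$ on $I_{k-1}$. Thus controlling derivatives up to order $j$ on $I_k$ consumes control up to order $j+1$ on $I_{k-1}$, so each interval lowers the available Malliavin order by one; starting from order $m$ on $I_1$ leaves order at least $1$ --- enough to define the next integral --- only through the $m$-th interval, that is up to time $mr$.

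Finally, uniqueness follows from the same interval structure: two mild solutions coincide with $\varphi$ on $[-r,0]$, hence are given on $I_1$ by the identical explicit formula and agree there; if they agree on $I_{k-1}$ then the delayed arguments on $I_k$ coincide, the right-hand sides are identical, and they agree on $I_k$, so by induction they agree on $[-r,T]$. The main obstacle is the third paragraph: closing the coupled Malliavin-derivative estimates --- handling the higher-order chain-rule expansion of the nonlinear coefficients and the derivative-of-divergence terms while keeping every constant uniform in $t$ --- is where the genuine technical work lies, and it is exactly this coupling that forces the restriction $T\le mr$.
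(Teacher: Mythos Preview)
Your proposal is correct and follows essentially the same step-by-step construction as the paper: build the solution on successive intervals $[(k-1)r,kr]$, using that the delayed terms are already known, and propagate the Malliavin regularity $\mathcal{A}_{m-k+1}$ (the paper indexes this as $\mathcal{A}_{m-n}$ on $[0,nr]$, a harmless shift) with the loss of one order per interval forcing $T\le mr$. The only organisational difference is that the paper isolates the three estimates you sketch in your third paragraph --- composition with bounded-derivative nonlinearities, the Bochner integral against $R$, and the Skorohod integral against $R$ --- into a separate Lemma (proved in \cite{Boufoussi2}) and simply invokes it at the inductive step, whereas you outline these bounds directly.
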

For the proof we need the following lemma which can be proved by the same arguments as those used in \cite{Boufoussi2}. 
\begin{lemma}\label{lem1}
	Let $y=\{y(t),t\in [0,T]\}$ be a stochastic process.
	\begin{itemize}
		\item [$(i)$] If $y$ satisfies condition $\mathcal{A}_m$ and if $b:V\to V$ is a bounded function with bounded derivatives up to order $m$.
		Then, the stochastic process\\ $\{Z(t)=b(y(t)), t\in [0,T]\}$ satisfies condition $\mathcal{A}_m$.
		\item  [$(ii)$] If $y$ satisfies condition $\mathcal{A}_m$, then, the stochastic process\\ $\{Z(t)=\int_0^t R(t-s)y(s)\,ds,\, t\in [0,T]\}$ satisfies condition $\mathcal{A}_m$.
		\item [$(iii)$] If $y$ satisfies condition $\mathcal{A}_{m+1}$, then,the stochastic Skorohod integral\\ $\{Z(t)=\int_0^t R(t-s)y(s)\, dB^H(s), t\in [0,T]\}$
		is well defined and the stochastic process $Z=\{Z(t),\,t\in [0,T]\}$ satisfies condition $\mathcal{A}_m$.
	\end{itemize}
\end{lemma}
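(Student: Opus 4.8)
The plan is to verify, separately for each of the three transformations, the two bounds that define condition $\mathcal{A}$ — the uniform $L^2$-bound on the process and the uniform $L^2$-bounds on its iterated Malliavin derivatives up to the relevant order — treating membership in $\mathbb{D}^{m,2}(V)$ as a by-product of these uniform estimates together with the closability of $D$. Throughout I would use that the resolvent is bounded, $\|R(t)\|\le Ne^{\beta t}\le Ne^{\beta T}$ on $[0,T]$, so that $R$ contributes only a fixed multiplicative constant to every estimate.

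For assertion $(i)$ I would argue by the chain rule for the Malliavin derivative. The zeroth-order bound is immediate since $b$ is bounded, giving $\mathbb{E}\|b(y(t))\|_V^2\le\|b\|_\infty^2$. For the iterated derivatives I would use the higher-order (Fa\`a di Bruno) chain rule: $D^k_u\big(b(y(t))\big)$ is a finite sum of terms $b^{(j)}(y(t))\big[D^{k_1}_{u^{(1)}}y(t),\dots,D^{k_j}_{u^{(j)}}y(t)\big]$ with $k_1+\cdots+k_j=k$ and the length-$k$ direction vector $u$ split among the factors. Since each Fr\'echet derivative $b^{(j)}$, $j\le m$, is bounded as a multilinear map, every such term is dominated in norm by a constant times $\prod_{\ell}\|D^{k_\ell}_{u^{(\ell)}}y(t)\|_V$. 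For the single-factor terms ($j=1$) the $L^2$-bound is immediate from $c_{2,k}$; the delicate point is that for $j\ge 2$ the $\mathcal{A}_m$-hypothesis furnishes only $L^2$-control of each factor, so the products must be handled by H\"older's inequality together with the uniformity in $t$, the step where the argument implicitly leans on the higher integrability propagated along the iteration in \cite{Boufoussi2}.

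For assertion $(ii)$ the estimates are routine. As $s\mapsto R(t-s)$ is deterministic and bounded, the Malliavin derivative commutes with the Bochner integral, so $D^k_u Z(t)=\int_0^t R(t-s)\,D^k_u y(s)\,ds$; the zeroth- and $k$-th order bounds then follow from Jensen's (or Cauchy--Schwarz) inequality in the time variable, producing constants of the shape $(Ne^{\beta T}T)^2 c_{1}$ and $(Ne^{\beta T}T)^2 c_{2,k}$, uniformly in $t\in[0,T]$.

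Assertion $(iii)$ is the main obstacle and is what forces the loss of one order of regularity. Well-definedness first: since $y$ satisfies $\mathcal{A}_{m+1}$ it satisfies $\mathcal{A}_1$, hence lies in $\mathbb{D}^{1,2}(\widetilde{\mathcal{H}}_V)$, and because $R(t-\cdot)$ is bounded the integrand $R(t-\cdot)y(\cdot)\mathbf{1}_{[0,t]}$ also lies in $\mathbb{D}^{1,2}(\widetilde{\mathcal{H}}_V)\subset Dom(\delta)$, so $Z(t)=\delta\big(R(t-\cdot)y(\cdot)\big)$ exists. The zeroth-order bound comes directly from the stated formula for $\mathbb{E}\|\delta(u)\|^2$, whose two summands are controlled by $\sup_s\mathbb{E}\|y(s)\|_V^2$ and a first-derivative bound on $y$. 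For the Malliavin derivatives I would iterate the commutation rule $D_\theta\delta(u)=u(\theta)+\delta(D_\theta u)$, obtaining
\[
D^k_{\theta_1\cdots\theta_k}Z(t)=\sum_{i=1}^k R(t-\theta_i)\,D^{k-1}_{\theta_1\cdots\widehat{\theta_i}\cdots\theta_k}y(\theta_i)+\delta\big(R(t-\cdot)\,D^k_{\theta_1\cdots\theta_k}y(\cdot)\big).
\]
The $k$ boundary terms are controlled in $L^2$ by the $\mathcal{A}$-bounds on $D^{k-1}y$, while for the divergence term I would again invoke the $\mathbb{E}\|\delta(\cdot)\|^2$ identity: its two summands involve $D^k y$ and one further derivative $D^{k+1}y$. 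For $k\le m$ this requires Malliavin derivatives of $y$ up to order $m+1$, which is precisely what $\mathcal{A}_{m+1}$ supplies and why $\mathcal{A}_m$ is the sharp conclusion for $Z$. The care needed in $(iii)$ is therefore twofold: justifying the iterated commutation of $D$ and $\delta$ by checking that each intermediate integrand $R(t-\cdot)D^jy(\cdot)$ stays in $Dom(\delta)$ via the $\mathbb{D}^{1,2}(\widetilde{\mathcal{H}}_V)$ criterion, and tracking the single derivative lost at the last application of the divergence isometry.
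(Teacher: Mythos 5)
The paper does not actually prove Lemma \ref{lem1}: it only remarks that the lemma ``can be proved by the same arguments as those used in \cite{Boufoussi2}'', so your proposal must be measured against the argument of that reference (itself following \cite{ferrante}). Your treatment of parts $(ii)$ and $(iii)$ is correct and is essentially that standard argument: commuting $D$ with the deterministic Bochner integral for $(ii)$, and for $(iii)$ iterating the commutation rule $D_\theta\delta(u)=u(\theta)+\delta(D_\theta u)$ together with the quoted identity for $E\|\delta(u)\|^2$, which converts the uniform pointwise $L^2$ bounds on $D^k y$ and $D^{k+1}y$ into uniform pointwise $L^2$ bounds on $D^k Z$; this is indeed where exactly one order of differentiability is lost, and you rightly flag the need to verify that each intermediate integrand $R(t-\cdot)D^j y(\cdot)$ lies in $\mathbb{D}^{1,2}(\widetilde{\mathcal{H}}_V)\subset Dom(\delta)$, which follows from the pointwise bounds as the paper itself notes.

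Part $(i)$, however, contains a genuine gap that you notice but do not close, and H\"older's inequality as invoked cannot close it. For $m\ge 2$, the Fa\`a di Bruno terms with $j\ge 2$ factors, e.g.\ $b''(y(t))[D_{u_1}y(t),D_{u_2}y(t)]$, have second moment controlled by $E\big[\|D_{u_1}y(t)\|^2\,\|D_{u_2}y(t)\|^2\big]$, and any H\"older splitting of this product produces fourth (in general $2j$-th) moments of the lower-order derivatives. Condition $\mathcal{A}_m$ as defined supplies only uniform pointwise $L^2$ bounds, and no finite-chaos or hypercontractivity structure is assumed, so these higher moments are simply not available from the hypothesis; ``uniformity in $t$'' does not help, since both factors are evaluated at the same time $t$. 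The standard repair --- and what the iterative scheme of Theorem \ref{thm1} actually propagates --- is to strengthen the induction to uniform pointwise bounds $\sup_t\sup_{u}E\|D^k_u Z(t)\|^p\le c_{k,p}$ for all (or sufficiently large) $p\ge 2$: the first segment $x_1$ has deterministically bounded derivatives, part $(ii)$ preserves such $L^p$ bounds trivially, part $(iii)$ preserves them via Meyer's inequalities for $\delta$ (the $L^2$ identity you use has no $L^p$ analogue, so this is an extra ingredient your write-up lacks), and then $(i)$ follows by generalized H\"older exactly along the lines you set up. As written, your proof of $(i)$ --- and indeed the lemma's literal statement for $m\ge 2$, which appears to inherit this imprecision from the references --- rests on integrability that neither $\mathcal{A}_m$ nor your argument provides.
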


\begin{proof} of Theorem \ref{thm1}.
	To prove that the equation $(\ref{eq1})$ admits a unique solution on $[0,T]$, with $T\leq mr $, we construct the solution step by step. Let us consider the induction hypothesis $(H_n)$ for $1\leq n\le m$:\\
	$ (H_n) $: The equation
	\begin{eqnarray}\label{eq related to Hn}
		x_{}(t)&=& R(t)[\varphi(0)+g(\varphi(-r))]-g(x_{}(t-r))+\int_0^tR(t-s)f(x_{}(s-r))ds\nonumber\\
		&+&\int_0^tR(t-s)\sigma(x_{}(s-r))\, dB^H(s)
		\,,\,\,\,\,\quad t\in [0,nr]\ \\
		x(t)&=&\varphi(t),\,\quad  t\in [-r,0]\nonumber
	\end{eqnarray}
	has a unique solution $x_{n}(t)$ which satisfies condition
	$\mathcal{A}_{m-n}$.\\
	Let us check $(H_1)$. For $t\in [0,r]$, equation $(\ref{eq related to Hn})$ can be written in the following form:
	\begin{eqnarray*}
		x_{1}(t)&=&\varphi(t),\,\quad  t\in [-r,0]\\
		x_{1}(t)&=& R(t)[\varphi(0)+g(\varphi(-r))]-g(\varphi(t-r))\\
		&+&\int_0^tR(t-s)f(\varphi(s-r))ds+\int_0^tR(t-s)\sigma(\varphi(s-r))\, dB^H(s)
		\,,\,\,\,\,\quad t\in [0,r]\ \\
	\end{eqnarray*}
	Since $\varphi$ is a deterministic continuous function, it follows that $x_1 (t)\in \mathbb{D}^{k,2}(V)$ for all $k\ge 1$. Therefore,
	\begin{eqnarray*}
		D_u x_1(t)=R(t-u)\sigma(\varphi(u-r))1_{u<t<r}
	\end{eqnarray*}
	and then $D^k x_1 (t)=0$ when $k\ge 2$. Thanks to the boundedness of the coefficients $f$, $g$ and $\sigma$ we can easily check that
	\begin{eqnarray*}
		\sup_t\mathbb{E}\|x_1 (t)\|^2\le c_1,\quad \sup_t\sup_{u,|u|=k}\mathbb{E}
		\|D^k_u x_1 (t)\|^2\le c_{2,k}
	\end{eqnarray*}
	Then $x_1 $ satisfies condition $\mathcal{A}_k$ for any $k\ge 1$, hence $x_1 $ satisfies $\mathcal{A}_{m-1}$.\\
	Assume now that $(H_n)$ is true for $n<m$, and check $(H_{n+1})$.
	Consider the stochastic process $\{x_{n+1}(t),t\in [-r,(n+1)r]\}$ defined as:
	\begin{eqnarray}\label{def of x_{n+1}}
		x_{n+1}(t)&=& R(t)[\varphi(0)+g(\varphi(-r))]-g(x_n(t-r))\nonumber\\
		&+&\int_0^tR(t-s)f(x_n(s-r))ds+\int_0^tR(t-s)\sigma(x_n(s-r))dB^H_s,\\
		x_{n+1}(t)&=&\varphi(t),\,\quad  t\in [-r,0]\,,\nonumber 
	\end{eqnarray}
	where $x_n$ is the solution obtained in $(H_n)$. The process $x_{n+1}$ is well defined, thanks to fact that $x_n$ satisfies $\mathcal{A}_{m-n}$, assumption $(\mathcal{H}.3)$,
	the boundedness of $R$ and Lemma \ref{lem1}. Moreover $x_{n+1}$ verifies $\mathcal{A}_{m-n-1}$.\\
	Therefore, for $t\le nr$, the uniqueness of the solution on $[0, nr]$, entails: 
	$$x_{n+1}(t)=x_n(t).$$
	Then, equation $(\ref{def of x_{n+1}})$ becomes:
	\begin{eqnarray*}
		x_{n+1}(t)&=& R(t)[\varphi(0)+g(\varphi(-r))]-g(x_{n+1}(t-r))\nonumber\\
		&+&\int_0^tR(t-s)f(x_{n+1}(s-r))ds+\int_0^tR(t-s)\sigma(x_{n+1}(s-r))dB^H_s.
	\end{eqnarray*}
	
	Finally, $x_{n+1}$ is the unique solution of equation $(\ref{eq related to Hn})$ on $[0,(n+1)r]$. The procedure is verified up to $n=m$, and the process $x(t)=x_m(t)$ is the unique solution of Equation $(\ref{eq1})$ on $[-r,T]$ for all $T\le mr$. We can easily check the continuity of the solution, that is $x(.)\in\mathcal{C}([-r,T],\mathbb{L}^2(\Omega,V))$. Which ends the proof of the theorem.
\end{proof}

\section{Regularity of the law of $F(x(t))$}
In this section, we find a sufficient conditions under which the law of $F(x(t))$ is absolutely continuous with respect to the Lebesgue measure, where $x(t)$ is the solution of Equation $(\ref{eq1})$ and $F$ is a real Lipschitzian function. More precisely, we have the following result:
\begin{theorem}
	Assume that the hypothesis of Theorem \ref{thm1} hold and let $\{x(t),t \in [0,T]\}$ be the solution of the equation (\ref{eq1}) on $[0,T]$, with $T\leq mr$, and
	$F:V\to \mathbb{R}$ be a Lipschitzian function. Then, for any $0< t \leq T$ the law of $F(x(t))$ is absolutely continuous with respect to the Lebesgue measure If: 
	$$\int_{t-r}^t \left(F'(x(t))R(t-u)\sigma (x(u-r))\right)^2 du > 0\quad a.s.\,.$$
\end{theorem}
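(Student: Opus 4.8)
The plan is to reduce the statement to the classical Malliavin criterion for absolute continuity (see \cite{nualart}): if a real random variable $G$ belongs to $\mathbb{D}^{1,2}$ and satisfies $\|DG\|_{\mathcal{H}}>0$ almost surely, then the law of $G$ is absolutely continuous with respect to the Lebesgue measure. I would apply this with $G=F(x(t))$, so the whole argument reduces to computing the Malliavin derivative $DG$ explicitly and checking that its $\mathcal{H}$-norm is strictly positive under the stated integral condition.

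First I would show that $F(x(t))\in\mathbb{D}^{1,2}$ and identify its derivative. By Theorem \ref{thm1} and the regularity encoded in condition $\mathcal{A}$, the solution satisfies $x(t)\in\mathbb{D}^{1,2}(V)$ together with the required integrability bounds. Since $F:V\to\mathbb{R}$ is Lipschitz, I would invoke the chain rule for Lipschitz functions, approximating $F$ by a sequence of smooth functions with uniformly bounded derivatives and passing to the limit in $\mathbb{D}^{1,2}$, to obtain $D_uF(x(t))=F'(x(t))D_ux(t)$, where $F'(x(t))$ acts on the $V$-valued derivative $D_ux(t)$ to produce a scalar.

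The decisive step is the computation of $D_ux(t)$ for $u\in(t-r,t)$. Differentiating the mild-solution identity that defines $x(t)$ and using the commutation relation $D_u\delta(v)=v(u)+\delta(D_uv)$ for a Skorohod integrand $v$, the stochastic term $\int_0^tR(t-s)\sigma(x(s-r))\,dB^H(s)$ contributes the value of its integrand at $u$, namely $R(t-u)\sigma(x(u-r))$, plus a Skorohod integral of $R(t-s)\sigma'(x(s-r))D_ux(s-r)$. For $u\in(t-r,t)$ and $s\le t$ one has $s-r\le t-r<u$, and the support property of the Malliavin derivative, $D_ux(\tau)=0$ whenever $u>\tau$ (which propagates along the inductive construction of the solution), forces this correction term to vanish; the contributions of $-g(x(t-r))$ and of $\int_0^tR(t-s)f(x(s-r))\,ds$ vanish for the same reason. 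Hence $D_ux(t)=R(t-u)\sigma(x(u-r))$ on $(t-r,t)$, and therefore $D_uF(x(t))=F'(x(t))R(t-u)\sigma(x(u-r))$ there.

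It then remains to translate positivity. The hypothesis states precisely that, with probability one, the map $u\mapsto D_uF(x(t))$ is not almost everywhere zero on $(t-r,t)$. Since the inner product $\langle\cdot,\cdot\rangle_{\mathcal{H}}$ is generated by the strictly positive definite kernel $\phi_H(s)=H(2H-1)|s|^{2H-2}$, any square-integrable function that is not almost everywhere zero has strictly positive $\mathcal{H}$-norm; consequently $\|DF(x(t))\|_{\mathcal{H}}>0$ almost surely, and the criterion yields the claimed absolute continuity. I expect the main obstacle to be the third step: rigorously justifying that the Skorohod correction terms vanish on $(t-r,t)$. This relies on the support (or backward-adaptedness) property of $D$, which is not automatic in the anticipating Skorohod setting and must be carried carefully through the step-by-step construction of Theorem \ref{thm1}; by comparison, the Lipschitz chain rule and the passage from $L^2$-positivity to $\mathcal{H}$-positivity are routine.
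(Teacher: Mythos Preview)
Your proposal is correct and follows essentially the same route as the paper: verify $F(x(t))\in\mathbb{D}^{1,2}$ via the Lipschitz chain rule, compute $D_ux(t)=R(t-u)\sigma(x(u-r))$ for $u\in(t-r,t)$ by differentiating the mild formulation and noting that all delayed terms vanish there, and invoke the Malliavin absolute-continuity criterion. The only cosmetic difference is that the paper phrases the criterion in terms of $\|DF(x(t))\|_{L^2([0,T])}>0$ (citing Bouleau--Hirsch, Proposition~7.1.4) rather than $\|DF(x(t))\|_{\mathcal{H}}>0$, so it does not need your final kernel-positivity step.
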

\begin{proof}
	Fix $t\in (0,T]$ such that $\int_{t-r}^t F'(x(t))R(t-u)\sigma (x(u-r))^2 du > 0$ a.s By Proposition 7.1.4 in \cite{boul}, it suffices to show that $F(x(t))\in \mathbb{D}^{1,2}$ and $\|DF(x(t))\|_{L^2([0,T])}>0$ a.s.\\
	Since $x(t)\in \mathbb{D}^{1,2}(V)$ and $F: V\longrightarrow {\mathbb R} $ is a Lipschitz function, then $F\in \mathbb{D}^{1,2}$ (see Proposition 1.2.4 page.29 in \cite{nualart}) and $$D_uF(x(t))=F'(x(t))D_ux(t)$$
	On the other hand, we have 
	\begin{eqnarray*}
		D_ux(t)&=&-g'(x(t-r))D_ux(t-r)1_{u<t-r}+\int_{u+r}^t R(t-s)f'(x(s-r))D_ux(s-r)ds\\
		&+& R(t-u)\sigma(x(u-r))
		+\int_{u+r}^t R(t-s)\sigma'(x(s-r))D_ux(s-r)dB^H(s)
	\end{eqnarray*}
	Hence, for  $u\in (t-r,t)$, we have
	\begin{equation}\label{d}
		D_ux(t)= R(t-u)\sigma(x(u-r))
	\end{equation}
	Note that $$\|DF(x(t))\|_{L^2([0,T])}>0 \; \mbox{a.s}\;
	\Leftrightarrow \int_0^T \left(F'(x(t))R(t-u)\sigma(x(u-r))\right)^2 du>0 \; \mbox{a.s}\;$$
	and a sufficient condition for this is that
	$$\int_{t-r}^t \left(F'(x(t))R(t-u)\sigma(x(u-r))\right)^2 du>0 \quad \mbox{a.s}\;$$
	which imply that the law of $F(x(t))$ has a density with respect to the Lebesgue measure. This completes the proof.
\end{proof}
\begin{example}
	In the case of $F(x)=\vert\vert x\vert\vert $, we get that the law of $\vert\vert x(t)\vert\vert_{V} $ is absolutely continuous with respect to the Lebesgue measure if
	$$\int_{t-r}^t <x(t)\,,\, R(t-u)\sigma(x(u-r))>_{V}^2 du>0 \quad \mbox{a.s}\,.$$
\end{example}

\section{Application}
We consider the following stochastic partial neutral functional integro-differential equation with finite delay $r$:
\begin{equation}\label{Eq2}
	\left\{
	\begin{array}{rl}
		\frac{\partial}{\partial t}\left[ x(t,y)+\hat{g}(x(t-r,y))\right] &=\frac{\partial^2}{\partial y^2}\left[ x(t,y)+\hat{g}(x(t-r,y))\right] \\
		&+\int_0^tb(t-s)\frac{\partial^2}{\partial y^2}\left[ x(t,y)+\hat{g}(x(s-r,y))\right] ds\\
		&+\hat{f}(x(t-r,y))+\hat{\sigma}(x(t-r,y))\frac{dB^H}{dt}(t)\\
		
		x(t,0)+\hat{g}(x(t-r,0))=0,\ t\ge 0\\
		x(t,\pi)+\hat{g}(x(t-r,\pi))=0,\ t\ge 0\\
		x(t,y)=\varphi(t,y),\ -r\le t\le 0,\, 0\le y\le \pi
	\end{array}
	\right.
\end{equation}
where $\hat{g}, \hat{f}, \hat{\sigma}: \mathbb{R}\to \mathbb{R}$, and $b:\mathbb{R}\to \mathbb{R}$ are continuous functions, such that $\hat{g}(0)=\hat{f}(0)=0$ . Let $X=V=L^2([0,\pi])$ and define the operator $A:D(A)\to V$ by $Az=z"$ with domain
$$D(A)=\{z\in V; z,z'\  are\  absoluty\  continuous, z"\in V, z(0)=z(\pi)=0\}$$
It is well known that $A$ generates a strongly continuous semigroup $\{T(t),\, t\ge 0\}$ on $V$ which is given by:
$$T(t)\varphi=\sum_{n=1}^{\infty} e^{-n^2 t}<\varphi,e_n>\, e_n$$
where $e_n(x)=\sqrt{\frac{2}{\pi}}\sin(nx)$ is the orthogonal set of eigenvectors of $-A$.\\
Define the operators $g,\, f$, and $\sigma:V\to V$ by:
\begin{eqnarray*}
	g(\psi)(y)&=& \hat{g}(\psi(y)),\ for\  \psi \in V\  and\  y\in [0,\pi]\\
	f(\psi)(y)&=& \hat{f}(\psi(y)),\ for\  \psi \in V\  and\  y\in [0,\pi]\\
	\sigma(\psi)(y)&=& \hat{\sigma}(\psi(y)),\ for\  \psi \in V\  and\  y\in [0,\pi]\\
\end{eqnarray*}
If we put 
$$x(t)(y)=x(t,y),\quad for \ t\ge -r\quad and\quad y\in [0,\pi]$$
Let $B:D(A)\subset V\to V$ be defined by 
$$B(t)y=b(t)Ay,\quad for\; t \ge 0\quad and\; y\in D(A)$$
then the equation $(\ref{Eq2})$ becomes 
\begin{equation}
	\left\{
	\begin{array}{rl}
		d\left[ x(t)+g(x(t-r))\right]& = A\left[ x(t)+g(x(t-r))\right] dt\\
		&+\left[ \int_0^tB(t-s)\left[ x(s)+g(x(s-r))\right] ds+f(x(t-r))\right] dt\\
		&+\sigma(x(t-r))dB^H(t)\\
		x(t)&=\varphi,\ t\in [-r,0]
	\end{array}
	\right.
\end{equation}
Moreover, if $b$ is a bounded and $C^1$ function such that $b'$ is bounded and uniformly continuous, then $(\mathcal{H}.1)$ and $(\mathcal{H}.2)$ are satisfied and hence by theorem \ref{res}, equation $(\ref{Eq2})$ has a resolvent operator $(R(t),\ t\ge 0)$ on $V$.\\
Further, if we impose suitable regularity conditions on $\hat{g}$, $\hat{f}$ and $\hat{\sigma}$ such that $g$, $f$ and $\sigma$ verify assumptions of theorem \ref{thm1}. then we conclude the existence and uniqueness of the mild solution of the equation $(\ref{Eq2})$.

\end{document}